\newfont{\teneufm}{eufm10}
\newfont{\seveneufm}{eufm7}
\newfont{\fiveeufm}{eufm5}
\def\bbbc{{\mathchoice {\setbox0=\hbox{$\displaystyle\rm C$}\hbox{\hbox
to0pt{\kern0.4\wd0\vrule height0.9\ht0\hss}\box0}}
{\setbox0=\hbox{$\textstyle\rm C$}\hbox{\hbox
to0pt{\kern0.4\wd0\vrule height0.9\ht0\hss}\box0}}
{\setbox0=\hbox{$\scriptstyle\rm C$}\hbox{\hbox
to0pt{\kern0.4\wd0\vrule height0.9\ht0\hss}\box0}}
{\setbox0=\hbox{$\scriptscriptstyle\rm C$}\hbox{\hbox
to0pt{\kern0.4\wd0\vrule height0.9\ht0\hss}\box0}}}}
\def\bbbq{{\mathchoice {\setbox0=\hbox{$\displaystyle\rm
Q$}\hbox{\raise 0.15\ht0\hbox to0pt{\kern0.4\wd0\vrule
height0.8\ht0\hss}\box0}} {\setbox0=\hbox{$\textstyle\rm
Q$}\hbox{\raise 0.15\ht0\hbox to0pt{\kern0.4\wd0\vrule
height0.8\ht0\hss}\box0}} {\setbox0=\hbox{$\scriptstyle\rm
Q$}\hbox{\raise 0.15\ht0\hbox to0pt{\kern0.4\wd0\vrule
height0.7\ht0\hss}\box0}} {\setbox0=\hbox{$\scriptscriptstyle\rm
Q$}\hbox{\raise 0.15\ht0\hbox to0pt{\kern0.4\wd0\vrule
height0.7\ht0\hss}\box0}}}}
\def\bbbt{{\mathchoice {\setbox0=\hbox{$\displaystyle\rm
T$}\hbox{\hbox to0pt{\kern0.3\wd0\vrule height0.9\ht0\hss}\box0}}
{\setbox0=\hbox{$\textstyle\rm T$}\hbox{\hbox
to0pt{\kern0.3\wd0\vrule height0.9\ht0\hss}\box0}}
{\setbox0=\hbox{$\scriptstyle\rm T$}\hbox{\hbox
to0pt{\kern0.3\wd0\vrule height0.9\ht0\hss}\box0}}
{\setbox0=\hbox{$\scriptscriptstyle\rm T$}\hbox{\hbox
to0pt{\kern0.3\wd0\vrule height0.9\ht0\hss}\box0}}}}
\def\bbbs{{\mathchoice
{\setbox0=\hbox{$\displaystyle     \rm S$}\hbox{\raise0.5\ht0\hbox
to0pt{\kern0.35\wd0\vrule height0.45\ht0\hss}\hbox
to0pt{\kern0.55\wd0\vrule height0.5\ht0\hss}\box0}}
{\setbox0=\hbox{$\textstyle        \rm S$}\hbox{\raise0.5\ht0\hbox
to0pt{\kern0.35\wd0\vrule height0.45\ht0\hss}\hbox
to0pt{\kern0.55\wd0\vrule height0.5\ht0\hss}\box0}}
{\setbox0=\hbox{$\scriptstyle      \rm S$}\hbox{\raise0.5\ht0\hbox
to0pt{\kern0.35\wd0\vrule height0.45\ht0\hss}\raise0.05\ht0\hbox
to0pt{\kern0.5\wd0\vrule height0.45\ht0\hss}\box0}}
{\setbox0=\hbox{$\scriptscriptstyle\rm S$}\hbox{\raise0.5\ht0\hbox
to0pt{\kern0.4\wd0\vrule height0.45\ht0\hss}\raise0.05\ht0\hbox
to0pt{\kern0.55\wd0\vrule height0.45\ht0\hss}\box0}}}}
\def\bbbz{{\mathchoice {\hbox{$\sf\textstyle Z\kern-0.4em Z$}}
{\hbox{$\sf\textstyle Z\kern-0.4em Z$}} {\hbox{$\sf\scriptstyle
Z\kern-0.3em Z$}} {\hbox{$\sf\scriptscriptstyle Z\kern-0.2em
Z$}}}}
\newtheorem{theorem}{Theorem}
\newtheorem{lemma}[theorem]{Lemma}
\def\squareforqed{\hbox{\rlap{$\sqcap$}$\sqcup$}}
\def\qed{\ifmmode\squareforqed\else{\unskip\nobreak\hfil
\penalty50\hskip1em\null\nobreak\hfil\squareforqed
\parfillskip=0pt\finalhyphendemerits=0\endgraf}\fi}
\def\cD{{\mathcal D}}
\def\cI{{\mathcal I}}
\def\cL{{\mathcal L}}
\def\cS{{\mathcal S}}
\def\cU{{\mathcal U}}
\def\cW{{\mathcal W}}
\def \sf {\mathfrak s}
\def\bS{\overline{\mathcal S}}
\def\bI{\overline{\mathcal I}}
\newcommand{\ignore}[1]{}
\def \F{\mathbb{F}}
\def\mand{\qquad\mbox{and}\qquad}
\def\\{\cr}
\def\({\left(}
\def\){\right)}
\def\fl#1{\left\lfloor#1\right\rfloor}
\def\rf#1{\left\lceil#1\right\rceil}
\begin{document}


\title[Character Sums and  Root Finding]{Character Sums and
Deterministic Polynomial Root Finding in Finite Fields}

\author{Jean~Bourgain}
\address{Institute for Advanced Study,
Princeton, NJ 08540, USA}
\email{bourgain@ias.edu}

\author{Sergei V.~Konyagin}
\address{Steklov Mathematical Institute,
8, Gubkin Street, Moscow, 119991, Russia}
\email{konyagin@mi.ras.ru}

\author{Igor E. Shparlinski}
\address{Department of Pure Mathematics, University of New South Wales,
Sydney, NSW 2052, Australia}
\email{igor.shparlinski@unsw.edu.au}

\begin{abstract}  We obtain a new bound of certain double multiplicative character
sums. We use this bound together with some other previously obtained
results to obtain new algorithms for finding roots of
polynomials modulo a prime $p$.
\end{abstract}

\keywords{finite field, root finding, character sums, multiplicative energy}
\subjclass[2010]{11L40, 11T06, 11Y16, 68Q25}

\maketitle

\section{Introduction}

Let $\F_q$ be a finite field of $q$ elements of characteristic $p$.
The classical algorithm of Berlekamp~\cite{Ber} reduces the
problem of factoring polynomials of degree $n$ over $\F_q$ to the
problem of factoring squarefree polynomials of degree $n$ over $\F_p$
that fully split in $\F_p$, see also~\cite[Chapter~14]{vzGG}.
Shoup~\cite[Theorem~3.1]{Shoup} has given  a
deterministic algorithm  that fully factors any polynomial
of degree $n$ over $\F_p$ in $O(n^{2 + o(1)} p^{1/2} (\log p)^2)$
arithmetic operations over $\F_p$; in particular it runs in time
$n^{2} p^{1/2+o(1)}$. Furthermore,
Shoup~\cite[Remark~3.5]{Shoup} has also announced
an algorithm of complexity $O(n^{3/2 + o(1)} p^{1/2} (\log p)^2)$
for factoring arbitrary univariate polynomials of degree $n$ over $\F_p$.

We remark, that although the efficiency of deterministic polynomial
factorisation algorithms falls far behind
the fastest probabilistic algorithms, see,
for example,~\cite{vzGSh,KaSh,KeUm}, the question is
of great theoretic interest.

Here we address a special case of the polynomial factorisation problem when
the polynomial $f$ fully splits over $\F_p$ (as we have noticed
there is a polynomial time reduction between factoring general
polynomials and polynomials that split over $\F_p$). That is,
here we deal with the root finding problem. We also note that
in order to find a root (or all roots) of a polynomial $f \in \F_p[X]$,
it is enough to do the same for the polynomial $\gcd\(f(X), X^{p-1}-1\)$
which is squarefree fully splits over $\F_p$.

We consider two variants of the root finding problem:
\begin{itemize}
\item Given  a polynomial $f \in \F_p[X]$, find all roots of $f$ in $\F_p$.
\item Given  a polynomial $f \in \F_p[X]$, find at least one root of $f$ in $\F_p$.
\end{itemize}

For the case of finding all roots we show that essentially the initial
approach of Shoup~\cite{Shoup} together with the fast factor refinement procedure
of Bernstein~\cite{Bern} lead to an algorithm of complexity $n p^{1/2 +o(1)}$.
In fact this result is already implicit in~\cite{Shoup} but here we record
it again  with a very short proof. We use this as a benchmark for
our algorithm for the second problem.

We remark that a natural example of the situation when one has to
find a root of a polynomial of large degree arises in the problem of
constructing elliptic curves over $\F_p$  with prescribed number
of $\F_p$-rational points. In this case one has to find a root of the
{\it Hilbert class polynomial\/}, we refer to~\cite{Suth1,Suth2} for more
detail on this and underlying problems.

In the case of finding just one root, we obtain a faster algorithm,
which is based on bounds of double multiplicative character sums
$$
T_\chi(\cI, \cS) = \sum_{u \in \cI} \left|  \sum_{s\in \cS}    \chi(u+s)\right|^2,
$$
where $\cI = \{1, \ldots, h\}$ is an interval of $h$ consecutive
integers, $\cS \subseteq \F_p$ is  an arbitrary  set and $\chi$ is a
multiplicative character of $\F_p^*$.
More precisely, here we use a new  bound on $T_\chi(\cI, \cS)$
to improve the bound $np^{1/2+o(1)}$ in
the case when $n$ is large enough, namely if it grows as a power of $p$.
We believe that our new bound of the sums $T_\chi(\cI, \cS)$ as well as
several auxiliary results (based on some methods  from additive
combinatorics) are of independent
interest as well.

Throughout the paper, any implied constants in symbols $O$ and $\ll$
 may depend on two real positive parameters $\varepsilon$ and  $\delta$
and  are absolute otherwise. We recall
that the notations $U = O(V)$ and  $U \ll V$ are
all equivalent to the statement that $|U| \le c V$ holds
with some constant $c> 0$. We also use $U \asymp V$ to denote
that $U \ll V \ll U$.

\section{Bounds on The Number Solutions to Some Equations
and Character Sums}

\subsection{Uniform distribution and exponential sums}

The following result is well-known and can be found, for example, in~\cite[Chapter~1, Theorem~1]{Mont}
(which is a  more precise form of the celebrated Erd{\H o}s--Tur\'{a}n inequality).

\begin{lemma}
\label{lem:ET small int}
Let $\xi_1, \ldots, \xi_M$ be a sequence of $M$ points of the unit interval $[0,1]$.
Then for any integer $K\ge 1$, and an interval $[0,\rho] \subseteq [0,1]$,
we have
\begin{equation*}
\begin{split}
\# \{m =1, \ldots, M~:&~\xi_m  \in [0,\rho]\} - \rho M\\
\ll \frac{M}{K} + &\sum_{k=1}^K \(\frac{1}{K} +\min\{\rho, 1/k\}\)
\left|\sum_{m=1}^M \exp(2 \pi i k \xi_m)\right|.
\end{split}
\end{equation*}
\end{lemma}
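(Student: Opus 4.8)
The plan is to trap the counting function $N(\rho):=\#\{m : \xi_m\in[0,\rho]\}$ between the values at the points $\xi_m$ of two trigonometric polynomials of degree at most $K$, and then to expand those polynomials as Fourier sums. Concretely, I would begin by invoking the Beurling--Selberg extremal construction: there are real trigonometric polynomials
$$
S^{\pm}(x)=\sum_{|k|\le K} c_k^{\pm}\exp(2\pi i k x)
$$
of degree $\le K$ with $S^-(x)\le \mathbf{1}_{[0,\rho]}(x)\le S^+(x)$ for all $x\in[0,1]$, with constant term $c_0^{\pm}=\rho\pm\frac{1}{K+1}$, and with $|c_k^{\pm}|\ll \frac1K+\min\{\rho,1/|k|\}$ for $1\le|k|\le K$. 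These arise by periodising Selberg's one-sided approximations to the indicator of a half-line (themselves built from the Beurling function, the extremal entire majorant of $\operatorname{sgn}$ of exponential type $2\pi$), realising $[0,\rho]$ as a difference of two half-lines, and applying Poisson summation; the coefficient bound combines the decay of the Fourier transform of the Beurling function with $|\widehat{\mathbf{1}_{[0,\rho]}}(k)|\le\min\{\rho,1/|k|\}$. Endpoints are handled in the standard way, by taking the majorant of the closed interval and the minorant of the open one.

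Granting this, the estimate is immediate. Summing the sandwich over $m$ gives
$$
\sum_{m=1}^M S^-(\xi_m)\ \le\ N(\rho)\ \le\ \sum_{m=1}^M S^+(\xi_m),
$$
and expanding by linearity,
$$
\sum_{m=1}^M S^{\pm}(\xi_m)=\Big(\rho\pm\tfrac{1}{K+1}\Big)M+\sum_{1\le|k|\le K} c_k^{\pm}\sum_{m=1}^M\exp(2\pi i k\xi_m).
$$
Subtracting $\rho M$ and using the sandwich, one gets
$$
|N(\rho)-\rho M|\ \le\ \frac{M}{K+1}+\sum_{1\le|k|\le K}\big(|c_k^+|+|c_k^-|\big)\left|\sum_{m=1}^M\exp(2\pi i k\xi_m)\right|,
$$
and the asserted inequality now follows from $\frac{M}{K+1}\le\frac{M}{K}$, the symmetry $k\mapsto-k$ (under which $|c_{-k}^{\pm}|=|c_k^{\pm}|$ and $\big|\sum_m\exp(-2\pi i k\xi_m)\big|=\big|\sum_m\exp(2\pi i k\xi_m)\big|$), and the coefficient bound $|c_k^+|+|c_k^-|\ll\frac1K+\min\{\rho,1/|k|\}$.

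The only substantial point is the construction of $S^{\pm}$ enjoying \emph{both} the pointwise one-sided property and the sharp coefficient bound $|c_k^{\pm}|\ll K^{-1}+\min\{\rho,1/|k|\}$; this is the Beurling--Selberg extremal problem, and obtaining the weight $\min\{\rho,1/k\}$ rather than the cruder $1/k$ that a plain Fej\'er-kernel mollification would yield (and which would be useless when $\rho$ is small) is precisely what makes the lemma valuable. Since the statement is quoted verbatim from Montgomery's monograph, one may simply cite it; otherwise one reproduces Selberg's short construction together with the Poisson-summation computation of the $c_k^{\pm}$.
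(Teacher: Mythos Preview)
Your sketch is correct and is exactly the Beurling--Selberg argument that underlies the reference the paper cites; note, however, that the paper does not give any proof of this lemma at all but simply quotes it as \cite[Chapter~1, Theorem~1]{Mont}. So there is nothing to compare: you have supplied the standard proof behind the citation, and your closing remark that one may simply cite Montgomery is precisely what the authors do.
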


\subsection{Preliminary bounds}
\label{sec:prelim}

Throughout this section we
fix some set $\cS\subseteq \F_p$ of
and interval $\cI = \{1, \ldots, h\}$  of  $h \le  p^{1/2}$ consecutive
integers.

We say that a set $\cD \subseteq \F_p$ is $\Delta$-spaced
if no elements $d_1, d_2\in \cD$ and positive integer $k\le\Delta$
satisfy the equality $d_1+k=d_2$.

Here we always assume that the set $\cS$ is $h$-spaced.

Finally, we also fix some $L$ and
denote by $\cL$  the set of primes of the interval $[L, 2L]$.

We denote
\begin{equation*}
\begin{split}
\cW  =\Bigl\{(u_1,u_2,\ell_1,\ell_2,s_1,s_2)\in \cI^2&\times \cL^2 \times \cS^2~:\\
&~
\frac{u_1+s_1}{\ell_1} \equiv \frac{u_2+s_2}{\ell_2} \pmod p\Bigr\}.
\end{split}
\end{equation*}

The following result is based on some ideas of Shao~\cite{Shao}.

\begin{lemma}
\label{lem:W}  If $L < h$ and $2hL < p$ then
$$
\# \cW\ll (\# \cS h L)^2p^{-1}  +  \# \cS h L p^{o(1)}.
$$
\end{lemma}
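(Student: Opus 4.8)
The plan is to count the sextuples $(u_1,u_2,\ell_1,\ell_2,s_1,s_2)$ satisfying the congruence $\ell_2(u_1+s_1)\equiv\ell_1(u_2+s_2)\pmod p$ by fixing the pair $(\ell_1,\ell_2)$ and the pair $(s_1,s_2)$ and then counting admissible $(u_1,u_2)\in\cI^2$. Write the congruence as $\ell_2 u_1-\ell_1 u_2\equiv \ell_1 s_2-\ell_2 s_1\pmod p$. For fixed $\ell_1,\ell_2,s_1,s_2$ this is a single linear congruence in $(u_1,u_2)$ with $u_1,u_2\in\{1,\dots,h\}$. Since $L<h$ and $2hL<p$, the quantity $\ell_2 u_1-\ell_1 u_2$ ranges over an interval of length $\le 2hL<p$, so the congruence becomes the genuine \emph{equation} $\ell_2 u_1-\ell_1 u_2= \ell_1 s_2-\ell_2 s_1$ in integers (possibly after reducing the right side into the correct window), which for fixed right-hand side has $O(h/\max\{\ell_1,\ell_2\}+1)\ll O(h/L)$ solutions $(u_1,u_2)$ when $\ell_2\ne\ell_1$ — roughly, $u_1$ determines $u_2$ up to the divisibility constraint $\ell_2\mid(\ell_1 u_2+\text{const})$. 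So up to the diagonal contribution, $\#\cW\ll (h/L)\cdot\#\{(\ell_1,\ell_2,s_1,s_2):\ \ell_1 s_2-\ell_2 s_1 = m\}$ summed over the relevant $m$, i.e. $\ll (h/L)\cdot\#\{(\ell_1,\ell_2,s_1,s_2):\ \ell_1 s_2\equiv \ell_2 s_1\pmod p\}$ once we fold back in the count that each $m$ carries $O(h/L)$ choices of $(u_1,u_2)$.

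The core is therefore to bound $N=\#\{(\ell_1,\ell_2,s_1,s_2)\in\cL^2\times\cS^2:\ \ell_1 s_2\equiv\ell_2 s_1\pmod p\}$. Here is where Shao's idea enters. Separate the diagonal-type contribution $\ell_1 s_2=\ell_2 s_1$ (as integers, using $\ell_i<h$ and the $h$-spacing of $\cS$ to control this — since $\ell_1 s_2$ and $\ell_2 s_1$ are then forced to agree modulo $p$ only if they agree, as the differences are too small) and the genuinely ``off-diagonal'' part. For the diagonal, since $\ell_1,\ell_2$ are primes in $[L,2L]$ and $\cS$ is $h$-spaced with $h>L$, the equation $\ell_1 s_2=\ell_2 s_1$ with $\ell_1\ne\ell_2$ forces $\ell_2\mid s_2$, which is impossible for $s_2$ in a short arithmetic-progression-free configuration unless $s_1=s_2$, $\ell_1=\ell_2$; this pins the diagonal contribution to $O(\#\cS\cdot L)$, contributing $O(h/L)\cdot\#\cS L=O(\#\cS h)$ after multiplying by the $(u_1,u_2)$-count — actually one needs to be a little more careful, giving the $\#\cS hL\,p^{o(1)}$ term with the divisor-bound $p^{o(1)}$ absorbing the number of factorizations. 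For the off-diagonal part, I would apply a standard large-sieve / point-counting argument: the number of $(s_1,s_2)\in\cS^2$ with $s_1/s_2$ lying in a fixed residue class (namely $\equiv \ell_1/\ell_2$) averaged over the $\asymp L^2$ prime ratios is $\ll (\#\cS)^2 L^2 p^{-1}+(\#\cS)^2\cdot(\text{error})$, and multiplying by $h/L$ and by $L$ (for the $\ell$-sum normalization) yields the main term $(\#\cS hL)^2p^{-1}$.

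The main obstacle will be making the off-diagonal count tight enough to land exactly on $(\#\cS hL)^2p^{-1}$ without an extra factor. The clean way is: for each fixed $(u_1,u_2)$ with $\gcd$-type behaviour as above, and each fixed pair $(\ell_1,\ell_2)$, the map $(s_1,s_2)\mapsto \ell_2(u_1+s_1)\pmod p$ versus $\ell_1(u_2+s_2)\pmod p$ forces $s_1$ to lie in a single residue class once $s_2,\ell_1,\ell_2,u_1,u_2$ are fixed, so the number of valid $s_1\in\cS$ is $\le 1+\#\cS\cdot h/p$ using that $\cS$ is $h$-spaced (an $h$-spaced set meets any interval of length $H$ in $\le H/h+1$ points, hence meets a residue class mod $p$ in $\le p/(hp)\cdot\#S+\dots$ — more precisely one uses $h$-spacing to say $\cS$ meets any residue class mod $p$ in $O(1+\#\cS h/p)$ points, since consecutive elements of $\cS$ differ by $\ge h$). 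Summing $\#\cS$ (for $s_2$) times $\#\cL^2\asymp L^2$ (for $\ell_1,\ell_2$) times $h/L$ (for $u_1,u_2$ after fixing one of them — note $u_2$ ranges freely over $h$ values, $u_1$ is then $\asymp$ determined, giving $h$ not $h/L$, so one must instead fix $(\ell_1,\ell_2,s_1,s_2)$ first as in my first paragraph) times $(1+\#\cS h/p)$ gives $\#\cS L^2 h\cdot(1+\#\cS h/p)=\#\cS L^2 h+(\#\cS hL)^2p^{-1}$, and the first term is $\le \#\cS hL\,p^{o(1)}$ since $L\le h\le p^{1/2}$ gives $L^2\le hL\le hL\,p^{o(1)}$... this needs $L\ll p^{o(1)}$ or more care; the actual argument instead gains from summing the character-sum / additive-energy structure of $\cS$ rather than treating it as arbitrary $h$-spaced, and that refined bookkeeping — balancing the diagonal divisor bound against the equidistribution term — is the delicate part I would spend the most effort on.
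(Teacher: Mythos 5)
Your opening reduction matches the paper's: split off the $\ell_1=\ell_2$ diagonal, then for fixed $(\ell_1,\ell_2,s_1,s_2)$ with $\ell_1\ne\ell_2$ count the $(u_1,u_2)\in\cI^2$ satisfying $\ell_2 u_1-\ell_1 u_2\equiv \ell_1 s_2-\ell_2 s_1\pmod p$ as $O(h/L)$ via $2hL<p$. That step is correct. But the rest of the argument has a genuine gap, in two places. First, passing from ``summed over the relevant $m$'' to $N=\#\{(\ell_1,\ell_2,s_1,s_2):\ell_1 s_2\equiv\ell_2 s_1\pmod p\}$ is wrong: the correct quantity is the number of quadruples for which $\ell_1 s_2-\ell_2 s_1\pmod p$ falls into an interval of length $O(hL)$ (so that the $(u_1,u_2)$-congruence is solvable in $\cI^2$ at all), not the number with the residue equal to $0$. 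The latter is a much smaller set, so you would be proving an upper bound for the wrong object. Second, your attempted bound of this quadruple count collapses: since $\cS\subseteq\F_p$, ``$s_1$ lies in a fixed residue class mod $p$'' pins $s_1$ to at most one element, and the factor ``$1+\#\cS h/p$'' has no content (it is $O(1)$ by $h$-spacing). After this the bookkeeping does not close — you yourself note that treating $\cS$ as an arbitrary $h$-spaced set leaves a stray factor of $L$ — and the concluding paragraph simply concedes this without resolving it.

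The missing idea is the smoothing device the paper uses. Replace $\cI$ by $\bI=\{-h,\dots,h\}$ and $\cS$ by $\bS=\cS+\cI$, paying a factor $h^{-2}$ (legitimate because $h$-spacing makes the decomposition $s+v$ with $s\in\cS$, $v\in\cI$ unique, so the map $(u_i,s_i,v_i)\mapsto(u_i-v_i,s_i+v_i)$ is injective). The point is that $\bS$ is a union of $\#\cS$ genuine intervals of length $h$, which has exploitable Fourier structure. One then applies the Erd\H{o}s--Tur\'an inequality (Lemma~\ref{lem:ET small int}) to count quadruples $(\ell_1,\ell_2,s_1,s_2)\in\cL^2\times\bS^2$ with $s_1\ell_2-s_2\ell_1\pmod p$ in a short interval, and estimates the resulting exponential sum by Cauchy--Schwarz in the $\ell$-variable, the divisor bound (reparametrising $r=k\ell$), and Parseval. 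Without the $\cS\to\cS+\cI$ smoothing, neither your ``large-sieve / point-counting'' sketch nor the $h$-spacing alone delivers the required equidistribution for the short-interval count, and the lemma does not follow.
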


\begin{proof} Clearly
\begin{equation}
\label{eq:bound1}
\# \cW = \# \cW^* + O(\# \cS h L) ,
\end{equation}
where
$$
\cW^* = \{(u_1,u_2,\ell_1,\ell_2,s_1,s_2)\in \cW~:~\ell_1 \ne \ell_2\}.
$$

Denote
$$
\bS = \cS + \cI = \{u+s~:~(u,v) \in \cI \times \cS\},\quad \bI = \{-h, \ldots, h\}.
$$
Clearly
\begin{equation*}
\begin{split}
\cW^* \ll h^{-2} \Bigl\{(u_1,u_2,\ell_1,\ell_2,s_1,s_2)\in \bI^2&\times \cL^2 \times \bS^2~:~
\ell_1 \ne \ell_2,\\
& \frac{u_1+s_1}{\ell_1} \equiv \frac{u_2+s_2}{\ell_2} \pmod p\Bigr\}.
\end{split}
\end{equation*}

Note that for fixed $\ell_1,\ell_2 \in\cL$, $\ell_1 \ne \ell_2$ and
integer $x$, $|x|\le2h L$ the congruence
$$u_1 \ell_2 - u_2\ell_1  \equiv x \pmod p
$$
is equivalent to the equation $u_1 \ell_2 - u_1 \ell_2 = x$
(since $ 2h L< p$) and thus has $O\(h/L\)$ solutions.
We rewrite
$$
\frac{u_1+s_1}{\ell_1} \equiv \frac{u_2+s_2}{\ell_2} \pmod p
$$
as
$$
s_1 \ell_2 - s_2 \ell_1 \equiv x \equiv u_1 \ell_2 - u_2\ell_1  \pmod p.
$$
One can consider that $x\ge0$. We now bound the cardinality of
\begin{equation*}
\begin{split}
\cU =\Bigl\{(x,\ell_1,\ell_2,s_1,s_2)\in [0,  2hL]&\times \cL^2 \times \bS^2~:
\\
& s_1 \ell_2 - s_2 \ell_1 \equiv x \pmod p\Bigr\}.
\end{split}
\end{equation*}
The above argument shows that
\begin{equation}
\label{eq:bound2}
\begin{split}
\cW^* \le h^{-2 } (h/L) \# \cU = h^{-1} L^{-1}  \# \cU.
\end{split}
\end{equation}

We now apply  Lemma~\ref{lem:ET small int} to the sequence of fractional parts
$$
\left\{\frac{s_1 \ell_2 - s_2 \ell_1}{p}\right\},
\qquad (\ell_1,\ell_2,s_1,s_2)\in  \cL^2 \times \bS^2,
$$
with $M = (\# \cL)^2  (\# \bS)^2$, $\rho = 2hL p^{-1}$
and $K = \rf{\rho^{-1}}$. This yields the bound
\begin{equation*}
\begin{split}
\# \cU \ll  (\# \cL)^2 & (\# \bS)^2 \rho \\
& + \rho
\sum_{k=1}^K
\left|\sum_{(\ell_1,\ell_2,s_1,s_2)\in  \cL^2 \times \bS^2}
\exp\(2 \pi i k \(s_1 \ell_2 - s_2 \ell_1\)/p\)\right|\\
= (\# \cL)^2 & (\# \bS)^2 \rho  + \rho
\sum_{k=1}^K
\left|\sum_{(\ell,s)\in  \cL \times \bS}
\exp\(2 \pi i k s \ell/p\)\right|^2.
\end{split}
\end{equation*}
Using the Cauchy inequality, denoting $r = k \ell$ and then using the classical
bound on the divisor function, we derive
\begin{equation*}
\begin{split}
\# \cU \ll   (\# \cL)^2 & (\# \bS)^2 \rho  + \rho \# \cL
\sum_{k=1}^K \sum_{\ell \in \cL}
\left|\sum_{s\in  \bS}
\exp\(2 \pi i k s \ell/p\)\right|^2\\
\ll   (\# \cL)^2 & (\# \bS)^2 \rho  + p^{o(1)} \rho \# \cL \sum_{r=0}^{p-1}
\left|\sum_{s\in  \bS}
\exp\(2 \pi ir s/p\)\right|^2,
\end{split}
\end{equation*}
since $r \in [1, 2KL] \subseteq [0, p-1]$ provided that
$p$ is sufficiently large. Thus, using the Parseval inequality
and recalling the values of our parameters, we obtain
$$
\# \cU  \ll h L^3  (\# \bS)^2 p^{-1}
+  hL^2 \# \bS p^{o(1)} .
$$
Using the trivial bound $\# \bS \ll \# \cS h$, we obtain
$$
\# \cU  \ll h^3 L^3 (\# \cS)^2  p^{-1}
+  h^2L^2 \# \cS p^{o(1)}.
$$
Thus, recalling~\eqref{eq:bound1} and~\eqref{eq:bound2} we conclude the proof.
\end{proof}

Denote
\begin{equation}
\begin{split}
\label{eq:Wxy}
W(x&,y)  \\
& =   \#\left\{(u,\ell,s,t)\in \cI \times \cL \times \cS^2~:~\frac{u+s}{\ell} = x,\ \frac{u+t}{\ell} = y\right\}.
\end{split}
\end{equation}

\begin{lemma}
\label{lem:W2}
 We have
$$
\sum_{x,y \in \F_p} W(x,y)^2 \ll
(\# S)^3 (h L)^2p^{-1}  +  (\# S)^2 h L p^{o(1)}.
$$
\end{lemma}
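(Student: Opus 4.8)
The plan is to express $\sum_{x,y\in\F_p} W(x,y)^2$ as a weighted count governed by $\#\cW$ and then invoke Lemma~\ref{lem:W}. Expanding the square (and reading the equalities in~\eqref{eq:Wxy} over $\F_p$ as congruences mod $p$), $\sum_{x,y\in\F_p} W(x,y)^2$ is the number of $8$-tuples $(u_1,\ell_1,s_1,t_1,u_2,\ell_2,s_2,t_2)\in(\cI\times\cL\times\cS^2)^2$ with
$$\frac{u_1+s_1}{\ell_1}\equiv\frac{u_2+s_2}{\ell_2}\pmod p\qquad\text{and}\qquad\frac{u_1+t_1}{\ell_1}\equiv\frac{u_2+t_2}{\ell_2}\pmod p.$$
The key point is that once $u_1,u_2,\ell_1,\ell_2$ are fixed, both congruences are \emph{the same} linear condition on the remaining pair of variables: each is equivalent to $\ell_2 a-\ell_1 b\equiv \ell_1 u_2-\ell_2 u_1\pmod p$, taken with $(a,b)=(s_1,s_2)$ and with $(a,b)=(t_1,t_2)$ respectively. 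Hence, writing $\tau=(u_1,u_2,\ell_1,\ell_2)\in\cI^2\times\cL^2$ and
$$R(\tau)=\#\Bigl\{(a,b)\in\cS^2~:~\frac{u_1+a}{\ell_1}\equiv\frac{u_2+b}{\ell_2}\pmod p\Bigr\},$$
one obtains the exact identities $\sum_{x,y\in\F_p}W(x,y)^2=\sum_{\tau\in\cI^2\times\cL^2}R(\tau)^2$ and, directly from the definition of $\cW$, $\#\cW=\sum_{\tau\in\cI^2\times\cL^2}R(\tau)$.

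Next I would bound $R(\tau)$. Since each $\ell_1\in\cL$ is a prime with $\ell_1\le 2L\le 2hL<p$, it is invertible modulo $p$, so for every $a\in\cS$ the congruence determines $b$ uniquely modulo $p$ and hence admits at most one $b\in\cS$; thus $R(\tau)\le\#\cS$ for all $\tau$. Combining this with the two identities,
$$\sum_{x,y\in\F_p}W(x,y)^2=\sum_{\tau}R(\tau)^2\le\Bigl(\max_\tau R(\tau)\Bigr)\sum_\tau R(\tau)\le\#\cS\cdot\#\cW.$$

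Finally, applying Lemma~\ref{lem:W} (so, as there, we tacitly assume $L<h$ and $2hL<p$) gives $\#\cW\ll(\#\cS\,hL)^2p^{-1}+\#\cS\,hL\,p^{o(1)}$, and multiplying through by $\#\cS$ yields exactly the asserted bound. There is no serious obstacle here: the entire content is the choice of the parametrisation by $\tau=(u_1,u_2,\ell_1,\ell_2)$, after which the two congruences defining a contributing $8$-tuple collapse to one and the same linear relation, producing the identities $\sum_{x,y}W(x,y)^2=\sum_\tau R(\tau)^2$ and $\#\cW=\sum_\tau R(\tau)$; all the quantitative work has already been carried out in the proof of Lemma~\ref{lem:W}. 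The one thing worth checking is simply that the hypotheses needed for Lemma~\ref{lem:W} are available in the setting of Lemma~\ref{lem:W2}.
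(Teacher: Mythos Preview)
Your proof is correct and follows essentially the same approach as the paper: expand $\sum_{x,y}W(x,y)^2$ as a count of $8$-tuples, observe that once $(u_1,u_2,\ell_1,\ell_2,s_1,s_2)\in\cW$ and $t_1\in\cS$ are fixed the value of $t_2$ is determined, hence the count is at most $\#\cS\cdot\#\cW$, and then invoke Lemma~\ref{lem:W}. Your parametrisation via $R(\tau)$ is a slightly more formal packaging of the same observation.
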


\begin{proof} Clearly
\begin{equation*}
\begin{split}
\sum_{x,y \in \F_p} W(x&,y)^2\\
 & =   \#\Bigr\{(u_1,u_2,\ell_1,\ell_2,s_1,t_1,s_2,t_2)\in \cI^2 \times \cL^2 \times \cS^4~:\\
 & \qquad\qquad\qquad \frac{u_1+s_1}{\ell_1} = \frac{u_2+s_2}{\ell_2}, \
 \frac{u_1+t_1}{\ell_1} = \frac{u_2+t_2}{\ell_2}\Bigl\}.
\end{split}
\end{equation*}
For each $(u_1,u_2,\ell_1,\ell_2,s_1,s_2) \in \cW$ and $t_1 \in \cS$ there is
only one possible values for $t_2$. The result now follows from Lemma~\ref{lem:W}.
 \end{proof}

\subsection{Character sum estimates}

First we recall the following special case of the Weil bound of character sums
(see~\cite[Theorem~11.23]{IwKow}).

\begin{lemma}
\label{lem:Weil}
For any polynomial $F(X) \in \F_p[X]$ with  $N$ distinct zeros
in  the algebraic closure of $\F_p$ and
which is not a perfect $d$th power in the ring of polynomials
over $\F_p$,
 and  a nonprincipal multiplicative character $\chi$ of $\F_p^*$
 of order $d$, we have
$$
\left| \sum_{x \in \F_p}\chi\(F(x)\)\right| \le N p^{1/2}.
$$
\end{lemma}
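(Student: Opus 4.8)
The plan is to derive this classical Weil bound from Weil's theorem on the Riemann Hypothesis for curves over finite fields, applied to the cyclic cover $y^{d}=F(x)$ of the line; below I sketch the reduction, the deep input being Weil's theorem itself.

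First I would normalise $F$. Factor $F(X)=c\prod_{i}(X-a_{i})^{e_{i}}$ over the algebraic closure of $\F_{p}$ with the $a_{i}$ pairwise distinct. Since $\chi$ has order $d$, one has $\chi(F(x))=\chi\bigl(c\prod_{i}(x-a_{i})^{e_{i}\bmod d}\bigr)$ whenever $F(x)\neq0$, while $\chi(F(x))=0$ at the zeros of $F$; so the sum over $x$ is unchanged if $F$ is replaced by $F_{0}(X)=c\prod_{d\,\nmid\,e_{i}}(X-a_{i})^{e_{i}\bmod d}$. This $F_{0}$ has no more distinct zeros than $F$ (hence at most $N$), has every zero of multiplicity in $\{1,\dots,d-1\}$, and is nonconstant --- this last point is where one uses that $F$ is not, up to a constant factor, a perfect $d$th power. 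So I may assume $F=F_{0}$; also, $\chi$ being a character of $\F_{p}^{*}$ forces $d\mid p-1$, so a primitive $d$th root of unity $\zeta$ lies in $\F_{p}^{*}$.

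Next I would pass to the Kummer-type $\ell$-adic sheaf $\cL_{\chi}(F)$ on the open curve $U=\A^{1}\setminus\cZ$, where $\cZ$ is the set of zeros of $F$ (so $\#\cZ\le N$): it is lisse of rank one, with nontrivial --- hence, as $d\mid p-1$, tame --- local monodromy at every point of $\cZ$ and at $\infty$, and with $H^{0}_{c}=H^{2}_{c}=0$. By the Grothendieck--Lefschetz trace formula, $\sum_{x\in\F_{p^{m}}}\chi^{(m)}(F(x))$ equals minus the trace of the $m$th power of geometric Frobenius on the compactly supported cohomology $H^{1}_{c}$ of $\cL_{\chi}(F)$; working with $\cL_{\chi}$ here, rather than with some $\cL_{\chi^{j}}$, is exactly the device that isolates the single character $\chi$, corresponding to one eigenspace of the $\Z/d\Z$-action $(x,y)\mapsto(x,\zeta y)$ on $y^{d}=F(x)$. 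The Grothendieck--Ogg--Shafarevich formula (all ramification tame) gives $\dim H^{1}_{c}=\#\cZ-1\le N-1$, and Weil's Riemann Hypothesis for the smooth projective model of $y^{d}=F(x)$ bounds every Frobenius eigenvalue on $H^{1}_{c}$ in absolute value by $p^{1/2}$. Taking $m=1$ and summing the at most $N-1$ eigenvalues yields $\bigl|\sum_{x\in\F_{p}}\chi(F(x))\bigr|\le(N-1)p^{1/2}\le Np^{1/2}$.

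The one substantive input is Weil's Riemann Hypothesis for curves over finite fields (equivalently, the purity of the weights appearing in the zeta function of $y^{d}=F(x)$), which I would cite rather than reprove; the reduction above, the extraction of the single character through the $\Z/d\Z$-eigenspace decomposition, and the Euler-characteristic computation of $\dim H^{1}_{c}$ are routine. This is the shape of the classical argument, and it is why the authors are content to quote \cite[Theorem~11.23]{IwKow}. Alternatively, one could give a self-contained proof through Stepanov's polynomial method applied directly to the number of $\F_{p}$-points of $y^{d}=F(x)$, but that is a development in its own right that I would not reproduce here.
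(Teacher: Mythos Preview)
The paper does not prove this lemma: it merely cites it as~\cite[Theorem~11.23]{IwKow}. Your sketch --- reduce the exponents modulo $d$, pass to the rank-one Kummer sheaf $\cL_\chi(F)$ on $\A^1\setminus\cZ$, compute $\dim H^1_c=\#\cZ-1$ by Grothendieck--Ogg--Shafarevich (everything tame since $d\mid p-1$), and bound the Frobenius eigenvalues by $p^{1/2}$ via Weil's Riemann Hypothesis for the curve $y^d=F(x)$ --- is the standard derivation and is correct; your $(N-1)p^{1/2}$ is in fact slightly sharper than the bound the paper records. You were also right to flag that the hypothesis one actually uses is that $F$ not be, up to a constant, a perfect $d$th power (equivalently, that some root has multiplicity $\not\equiv 0\pmod d$); the paper's phrasing is a touch looser, but this is clearly what is meant and how the lemma is applied in the proof of Lemma~\ref{lem:DoubleSums}.
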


The following estimate  improves and 
generalises~\cite[Lemma~14]{BGKS} and also~\cite[Theorem~8]{Chang}.
It proof  is 
based on the classical ``amplification'' 
argument of Burgess~\cite{Burg1,Burg2}.

\begin{lemma}
\label{lem:DoubleSums}
For any  positive $\delta > 0$ there is some $\eta > 0$ such that for
an  interval $\cI = \{1, \ldots, h\}$  of  $h \le  p^{1/2}$ consecutive
integers
and any  $h$-spaced set  $\cS \subseteq \F_p$
with
$$\# \cS h >  p^{1/2 + \delta},
$$
for any nontrivial multiplicative character $\chi$ of $\F_p^*$ we have
$$
 T_\chi(\cI, \cS) \ll  (\# S)^2 h  p^{ - \eta}.
$$
\end{lemma}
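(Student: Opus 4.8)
The plan is to run the Burgess amplification scheme in the form adapted to an arbitrary $h$-spaced set $\cS$, using the multiplicative shift by primes $\ell \in \cL$ to produce many ``equivalent'' sums whose inner variable ranges over a genuine interval, and then to control the resulting counting problem via Lemma~\ref{lem:W2}. First I would fix a parameter $L$ (a small power $p^{\kappa}$ of $p$ with $\kappa$ depending on $\delta$) and let $\cL$ be the set of primes in $[L,2L]$, so $\#\cL \asymp L/\log L = L p^{o(1)}$. For each $u \in \cI$, $s \in \cS$ and $\ell \in \cL$ write $\chi(u+s) = \chi(\ell)\,\chi((u+s)/\ell)$, where the division is in $\F_p$. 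Averaging over $\ell$ and using $|\chi(\ell)| = 1$, the sum $\sum_{s\in\cS}\chi(u+s)$ is, up to the constant factor $\#\cL$, equal to $\sum_{\ell\in\cL}\sum_{s\in\cS}\chi((u+s)/\ell)$. Substituting this into $T_\chi(\cI,\cS)$, expanding the square, and applying Cauchy--Schwarz in the $u$-variable, I would bound $T_\chi(\cI,\cS)$ by roughly $(\#\cL)^{-2}$ times a sum over $u\in\cI$ of $\bigl|\sum_{\ell,s}\chi((u+s)/\ell)\bigr|^2$; the point of the $h$-spacing hypothesis on $\cS$ together with $h \le p^{1/2}$ is that the numbers $(u+s)/\ell$ behave like points coming from an interval after clearing denominators, so the diagonal contributions are genuinely negligible.

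The core step is then to estimate $\sum_{u}\bigl|\sum_{\ell,s}\chi((u+s)/\ell)\bigr|^2 = \sum_{x,y\in\F_p} W(x,y)\,\overline{W'(x,y)}\,\chi(x)\overline{\chi(y)}$ where $W(x,y)$ is exactly the counting function defined in~\eqref{eq:Wxy}: the number of $(u,\ell,s)$ hitting $x$ and the number hitting $y$ collapse into counting quadruples $(u,\ell,s,t)$ with $(u+s)/\ell = x$ and $(u+t)/\ell = y$. Raising everything to the $2r$-th power for a suitable even integer $2r$ (the Burgess exponent, chosen depending on $\delta$), or alternatively taking a single second moment and iterating, I would reduce to bounding $\sum_{x,y} W(x,y)^2$, which is precisely the content of Lemma~\ref{lem:W2}, giving $\ll (\#\cS)^3 (hL)^2 p^{-1} + (\#\cS)^2 hL\, p^{o(1)}$. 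On the ``complementary'' factor one applies the Weil bound, Lemma~\ref{lem:Weil}, to the character sum $\sum_{x} \chi\bigl(\prod (x+s_i)\bigr)$ arising from the off-diagonal terms after the moment expansion; here the polynomial $\prod(x+s_i)$ is not a perfect $d$-th power precisely because $\cS$ is $h$-spaced (so the shifts $s_i$ are distinct in the relevant range), which is why that hypothesis cannot be dropped. Combining the Weil input $O(r p^{1/2})$ per surviving tuple with the Lemma~\ref{lem:W2} count of tuples, and optimising $r$ and $L$ against the hypothesis $\#\cS\, h > p^{1/2+\delta}$, yields a power saving $T_\chi(\cI,\cS) \ll (\#\cS)^2 h\, p^{-\eta}$ with $\eta = \eta(\delta) > 0$.

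The main obstacle I anticipate is the bookkeeping in the amplification step: one must verify that after the substitution $\chi(u+s) = \chi(\ell)\chi((u+s)/\ell)$ and the Cauchy--Schwarz step, the ``error'' terms — coincidences $(u_1+s_1)/\ell_1 = (u_2+s_2)/\ell_2$ with small denominators, the cases $\ell_1 = \ell_2$, and the passage from $\cS$ to $\bS = \cS + \cI$ — are all absorbed into the $p^{o(1)}$ term, and that the ranges satisfy the constraints $L < h$ and $2hL < p$ needed to invoke Lemma~\ref{lem:W}. This is exactly the place where the constants $\delta$, $\kappa$ (governing $L$), and $r$ have to be balanced simultaneously: $L$ must be large enough that the amplification gain $(\#\cL)^{-2} \asymp L^{-2} p^{o(1)}$ beats the loss from the $(\#\cS)^3 (hL)^2 p^{-1}$ main term in Lemma~\ref{lem:W2}, yet small enough that $L < h$ and the Weil bound stays effective; the hypothesis $\#\cS\, h > p^{1/2+\delta}$ is what creates the slack making such a choice possible, and extracting an explicit admissible $\eta$ is the only genuinely delicate computation.
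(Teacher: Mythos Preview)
Your outline has a real gap: the multiplicative rescaling $\chi(u+s)=\chi(\ell)\chi((u+s)/\ell)$ is a tautology and, by itself, gives no amplification.  If you average it over $\ell\in\cL$ and expand the square, you indeed arrive at
\[
T_\chi(\cI,\cS)=\frac{1}{\#\cL}\sum_{x,y\in\F_p}W(x,y)\,\chi(x)\overline{\chi(y)},
\]
with $W(x,y)$ exactly as in~\eqref{eq:Wxy}; but now the ``character factor'' is just $\chi(x)\overline{\chi(y)}$, of modulus $1$.  Any H\"older or Cauchy--Schwarz step against $W(x,y)$ then yields a factor $\sum_{x,y}1=p^{2}$ (or $\sum_{x,y}|\chi(x)\overline{\chi(y)}|^{2\nu}\le p^{2}$) on the character side, and combining this with Lemma~\ref{lem:W2} gives a bound no better than trivial.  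There is simply no inner sum on which Lemma~\ref{lem:Weil} can act, so your ``raising to the $2r$-th power and applying Weil'' step has nothing to bite on.

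What the paper does, and what is missing from your sketch, is an \emph{additive} shift $u\mapsto u+t\ell$ with a new short variable $t\in\{0,\ldots,T\}$, $T=\lfloor p^{\varepsilon}\rfloor$, \emph{before} the multiplicative rescaling by $\ell$.  After rescaling, $t\ell$ becomes $+t$, so one gets
\[
\sigma=\sum_{x,y}W(x,y)\sum_{t=0}^{T}\chi(x+t)\overline{\chi(y+t)},
\]
and it is this genuine short $t$-sum that is raised to the $2\nu$-th power and estimated by the Weil bound as $\ll T^{2\nu}p+T^{\nu}p^{2}$.  The cost of the additive shift is an error $O((\#\cS)^{2}TL)$, which is acceptable because $L=\lfloor hp^{-2\varepsilon}\rfloor$ is chosen so that $TL\ll hp^{-\varepsilon}$.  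Two further points in your write-up need correction: first, the $h$-spacing of $\cS$ is used in Lemma~\ref{lem:W} (hence Lemma~\ref{lem:W2}) to control the counting, not to guarantee that a Weil polynomial is not a $d$-th power; second, the polynomial in the Weil step is $\prod_{i}(X+t_i)^{\pm 1}$ with shifts $t_i\in[0,T]$, not $\prod_i(X+s_i)$ with $s_i\in\cS$.
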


\begin{proof} We choose a sufficiently small $\varepsilon$ and
define
$$
L = \fl{hp^{- 2\varepsilon}} \mand T = \fl{p^{\varepsilon}}.
$$
As in Section~\ref{sec:prelim},  we
denote by $\cL$  the set of primes of the interval $[L, 2L]$.
Note that
$$
\(\#\cS\)^2T L \ll  (\# S)^2 hp^{  - \varepsilon}.
$$
Then
\begin{equation}
\label{eq:T-Sigma}
\begin{split}
T_\chi(\cI, \cS)  & = \frac{1}{(T+1)\#\cL}\sigma  + O\(\(\#\cS\)^2T L\)\\
& = \frac{1}{(T+1)\#\cL}\sigma  + O\( (\# S)^2 hp^{  - \varepsilon}\),
\end{split}
\end{equation}
where
\begin{equation*}
\begin{split}
\sigma_   = & \sum_{\ell \in \cL}
\sum_{t = 0}^T   \sum_{u \in  \widetilde  \cI}  \sum_{s_1,s_2\in \cS}   \chi(u+s_1 + t \ell) \overline \chi(u+s_2 + t\ell) \\
    = & \sum_{u \in  \widetilde  \cI} \sum_{\ell \in \cL}   
        \sum_{s_1,s_2\in \cS}  \sum_{t = 0}^T  \chi\(\frac{u+s_1}{\ell} + t\)
\overline \chi\(\frac{u+s_2}{\ell} + t\).
\end{split}
\end{equation*}
Furthermore,
$$
\sigma   = \sum_{x,y \in \F_p} W(x,y) \sum_{t = 0}^T  \chi\(x + t\)
\overline \chi\(y + t\),
$$
where $W(x,y)$ is defined by~\eqref{eq:Wxy}.

Therefore, for any integer $\nu \ge 1$ by the H{\"o}lder inequality,  we have
\begin{equation}
\label{eq:Sigma}
\begin{split}
\sigma^{2\nu} \le \sum_{x,y \in \F_p}  W(x,y)^2 &  \(\sum_{x,y \in \F_p} W(x,y)\)^{2\nu -2}\\
 & \sum_{x,y \in \F_p}\left| \sum_{t = 0}^T  \chi\(x + t\) \overline \chi\(y + t\) \right|^{2 \nu}.
 \end{split}
\end{equation}
Clearly
\begin{equation}
\label{eq:Sum-W1}
\sum_{x,y \in \F_p} W(x,y) \ll \# \cI \# \cL \(\# \cS\)^2   \ll  (\# S)^2 hL.
\end{equation}
We also have
\begin{equation*}
\begin{split}
 \sum_{x,y \in \F_p}\left| \sum_{t = 0}^T  \chi\(x + t\) \overline \chi\(y + t\) \right|^{2 \nu}&\\
 =  \sum_{t_1, \ldots, t_{2\nu} = 0}^T &
\left | \sum_{x\in \F_p}\prod_{i=1}^\nu \chi\(x + t_i\) \prod_{i=\nu+1}^{2\nu}\overline \chi\(x + t_i\) \right|^2.
  \end{split}
\end{equation*}
Using the Weil bound in the form of Lemma~\ref{lem:Weil} if
$(t_1, \ldots, t_\nu)$ is not a permutation of $(t_{\nu +1}, \ldots, t_{2\nu})$, 
and the trivial bound otherwise, we derive
$$
\sum_{x,y \in \F_p}\left| \sum_{t = 0}^T  \chi\(x + t\)
\overline \chi\(y + t\) \right|^{2 \nu}
 \ll T^{2\nu} p +  T^{\nu} p^2
$$
(see also~\cite[Lemma~12.8]{IwKow} that underlies the Burgess method).
Taking $\nu$ to be large enough so that
$T^{2\nu} p > T^{\nu} p^2$ we obtain
\begin{equation}
\label{eq:Sum-S-2nu}
\sum_{x,y \in \F_p}\left| \sum_{t = 0}^T  \chi\(x + t\) \overline \chi\(y + t\) \right|^{2 \nu}
 \ll T^{2\nu} p .
\end{equation}

Substituting~\eqref{eq:Sum-W1} and~\eqref{eq:Sum-S-2nu} in~\eqref{eq:Sigma} we obtain
$$
\sigma^{2\nu} \ll  T^{2\nu} p \( (\# S)^2 hL\)^{2\nu-2}
\sum_{x,y \in \F_p} W(x,y)^2 .
$$
We now apply Lemma~\ref{lem:W2} to derive
\begin{equation}
\begin{split}
\label{eq:SigmaW}
\sigma^{2\nu} &\ll  T^{2\nu} p \( (\# S)^2 hL\)^{2\nu-2}
\( (\# S)^3 (hL)^2 p^{-1}  +   (\# S)^2 hL p^{o(1)}\)\\
 &\ll  T^{2\nu} p^{1+o(1)} \( (\# S)^2 hL\)^{2\nu}
\((\#\cS)^{-1} p^{-1}  + (\# S)^{-2} h^{-1}L^{-1}\) .
\end{split}
\end{equation}
Taking a sufficiently small $\varepsilon>0$, we obtain
$$
 (\# S)^2 h L > p^{1 +\delta}
$$
which together with~\eqref{eq:T-Sigma} concludes the proof.
\end{proof}

\section{Root Finding Algorithms}

\subsection{Finding all roots}

Here we address the question of finding all roots of
a polynomial $f \in \F_p[X]$.

We refer to~\cite{vzGG}
for description of efficient (in particular, polynomial
time) algorithms of polynomial arithmetic
over finite fields such as multiplication, division with 
remainder and computing  the greatest common divisor. 

\begin{theorem}
\label{thm:Factor}  There is a deterministic algorithm that,
given a squarefree polynomial $f \in \F_p[X]$ of degree $n$
 that fully splits over $\F_p$,
finds all roots of $f$ in time $np^{1/2+ o(1)}$.
\end{theorem}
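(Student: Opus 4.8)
The plan is to use the standard quadratic‑character splitting strategy, made unconditional through the Weil bound. Write $f(X)=\prod_{i=1}^{n}(X-\rho_i)$ with pairwise distinct $\rho_i\in\F_p$ (this is exactly what ``squarefree and fully split'' means), and assume $p$ is odd, the case $p=2$ being trivial. Let $\chi$ be the quadratic character of $\F_p^*$, with $\chi(0)=0$. For $a\in\F_p$ set $v_a:=(X+a)^{(p-1)/2}\bmod f$; since $\F_p[X]/(f)\cong\prod_{i}\F_p$ carries $X$ to $(\rho_i)_i$, Euler's criterion gives that $X-\rho_i$ divides $\gcd(f,v_a-1)$ precisely when $\rho_i+a$ is a nonzero square, i.e.\ when $\chi(\rho_i+a)=1$. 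Thus the shift $a$ ``separates'' two distinct roots $\rho_i,\rho_j$ as soon as exactly one of $\rho_i+a,\rho_j+a$ is a nonzero square.

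\emph{Number‑theoretic step.} I would fix $h:=\rf{p^{1/2}\log^{2}p}$ (any quantity of size $p^{1/2+o(1)}$ for which the estimate below holds would do) and show that every pair of distinct $r_1,r_2\in\F_p$ is separated by some $a\in\{0,\dots,h-1\}$. The polynomial $(X+r_1)(X+r_2)$ has two simple zeros and is not a perfect square, so by Lemma~\ref{lem:Weil} the complete sum $\sum_{x\in\F_p}\chi\bigl((x+r_1)(x+r_2)\bigr)$ is $O(p^{1/2})$, and the usual completion technique (P\'olya--Vinogradov) bounds the incomplete sum over $\{0,\dots,h-1\}$ by $O(p^{1/2}\log p)$. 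On the other hand, if no $a\in\{0,\dots,h-1\}$ separated $r_1$ from $r_2$, then for each such $a$ the pair $r_1+a,r_2+a$ consists either of two nonzero squares, or of two nonzero non‑squares, or contains a zero; in the first two cases $\chi\bigl((r_1+a)(r_2+a)\bigr)=1$ and the third occurs for at most two values of $a$. Hence $\sum_{a=0}^{h-1}\chi\bigl((r_1+a)(r_2+a)\bigr)\ge h-2$, which contradicts the previous bound once $p$ is large enough, so a separating shift exists.

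\emph{Algorithmic step.} Keep a list $\cF$ of monic nonconstant polynomials with $\prod_{g\in\cF}g=f$, initialized by $\cF=\{f\}$. For $a=0,1,\dots,h-1$ compute $v_a=(X+a)^{(p-1)/2}\bmod f$ by repeated squaring in $\F_p[X]/(f)$, which needs $O(\log p)$ multiplications, hence $n^{1+o(1)}\log p$ arithmetic operations in $\F_p$ using fast polynomial arithmetic. Then refine $\cF$ against the two‑block splitting cut out by $v_a-1$: every $g\in\cF$ is replaced by the nonconstant members of $\{\gcd(g,(v_a\bmod g)-1),\,g/\gcd(g,(v_a\bmod g)-1)\}$. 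Carrying out this whole refinement of the list by means of Bernstein's fast factor refinement~\cite{Bern} costs $n^{1+o(1)}$ operations, independently of $\#\cF$, since $\sum_{g\in\cF}\deg g=n$ at every stage. After all $h$ shifts, the number‑theoretic step forces every $g\in\cF$ to be linear; reading off its root yields a root of $f$, and together these give all $n$ roots. The total running time is $h\cdot n^{1+o(1)}\log p = p^{1/2+o(1)}\,n^{1+o(1)} = np^{1/2+o(1)}$.

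The one step that needs real care is the complexity bookkeeping: a single round of gcd‑splitting must be executed in near‑linear total time even when $\cF$ has grown to as many as $n$ factors, and this is exactly what Bernstein's fast factor refinement supplies; the powering $X\mapsto(X+a)^{(p-1)/2}$ and the character‑sum input (via the Weil bound) are entirely classical, and the only bookkeeping subtlety — that for a fixed pair of roots up to two shifts are ``lost'' to the degenerate case $\rho_i+a=0$ — is already absorbed into the $h-2$ in the separation estimate. This is why, as noted, the result is essentially Shoup's approach with the refinement procedure of Bernstein plugged in.
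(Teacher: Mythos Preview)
Your proof is correct and follows essentially the same route as the paper: Shoup's quadratic-character splitting with $h$ of order $p^{1/2}\log^2 p$, the Weil bound (completed via P\'olya--Vinogradov) to guarantee that every pair of roots is separated by some shift in $[0,h]$, and Bernstein's factor refinement to process all the resulting gcd-splittings in total time $nhp^{o(1)}$. The only cosmetic difference is that the paper computes all $g_u=\gcd(f,(X+u)^{(p-1)/2}-1)$ first and then invokes Bernstein's algorithm once on the whole family, whereas you describe the refinement incrementally; either organisation yields the same bound.
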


\begin{proof} We set
$$
h =  \fl {p^{1/2} (\log p)^2}.
$$
We now compute the polynomials
\begin{equation}
\label{eq:gcd0}
g_u(X) = \gcd\(f(X), (X+u)^{(p-1)/2}-1\), \quad  u =0,\ldots,h.
\end{equation}
We remark that to compute the greatest common divisor in~\eqref{eq:gcd0}
we first use repeated squaring to compute the residue
$$
H_u(X) \equiv   (X+u)^{(p-1)/2} \pmod {f(X)}, \qquad \deg H_u < n
$$
and then compute
$$
g_u(X) = \gcd\(f(X), H_u(X)\).
$$

If $a \in \F_p$ is a root of $f$ then $(X-a)\mid g_u(X)$ if 
and only if $a+u\neq0$ and $a+u$ is a quadratic residue in $\F_p$.

We now note that the Weil bound on incomplete character sums implies  that for any
two roots $a,b \in \F_p$ of $f$ there is $u\in [0,h]$ such that
\begin{equation}
\label{eq:split}
(X-a)\mid g_u(X) \mand (X-b)\nmid g_u(X).
\end{equation}
Note that the argument of~\cite[Theorem~1.1]{Shp} shows that one
can take $h =  \fl {Cp^{1/2}}$ for some absolute constant $C>0$ just getting 
some minor speed up of this and the original algorithm of Shoup~\cite{Shoup}.

We now recall the factor refinement algorithm of Bernstein~\cite{Bern},
that, in particular, for any set of $N$ polynomial $G_1, \ldots, G_N \in \F_p[X]$ of degree $n$ over $\F_p$
in time $O(nNp^{o(1)})$ finds a set of relatively prime polynomials
$H_1, \ldots, H_M \in \F_p[X]$ such that any polynomial $G_i$, $i=1, \ldots, N$, is a
product of powers of the polynomials $H_1, \ldots, H_M$. Applying this algorithm
to  the family of polynomials $g_u$, $u =0,\ldots,h$, and recalling~\eqref{eq:split}, we see
that it outputs the set of polynomials with
$$
\{H_1, \ldots, H_M\} = \{X-a~:~f(a) = 0\},
$$
which concludes the proof.
\end{proof}

\subsection{Finding one root}

Here we give an algorithm that finds one root of a
polynomial over $\F_p$. It is easy to see that up
to a logarithmic factor this problem
is equivalent to a problem of finding any nontrivial
factor of  a polynomial.

\begin{lemma}
\label{lem:one root}  There is a deterministic algorithm that,
given  a squarefree polynomial $f \in \F_p[X]$ of degree $n>1$
 that fully splits over $\F_p$,
finds in time $(n+p^{1/2})p^{o(1)}$ a factor $g \mid f$
of degree $1 \le \deg g < n$.
\end{lemma}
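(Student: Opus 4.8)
The plan is to mimic Shoup's splitting idea from the proof of Theorem~\ref{thm:Factor}, but instead of insisting on separating \emph{every} pair of roots, we only need one value of $u$ that separates \emph{some} pair, which is much cheaper once $n$ is large. First I would reduce to the case where $f$ has all its roots in $\F_p$ and is squarefree (already assumed), and introduce the polynomials
$$
g_u(X) = \gcd\!\left(f(X), (X+u)^{(p-1)/2} - 1\right), \qquad u = 0, 1, 2, \ldots
$$
computed via repeated squaring modulo $f$ exactly as in Theorem~\ref{thm:Factor}: each $g_u$ costs $np^{o(1)}$ operations. A root $a$ of $f$ satisfies $(X-a)\mid g_u$ iff $a+u$ is a nonzero quadratic residue, so $\deg g_u$ equals the number of roots $a$ with $a+u \in (\F_p^*)^2$. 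If for some $u$ in the range we scan we find $1 \le \deg g_u < n$, we are done: output $g = g_u$. So the only thing to prove is that scanning $u = 0, 1, \ldots, O(p^{1/2})$ (or whatever range makes the total cost $(n+p^{1/2})p^{o(1)}$) is \emph{guaranteed} to hit such a $u$, i.e.\ not all $g_u$ are trivial ($0$ or $n$).

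The key point is that $\deg g_u = n$ (all roots) would force $a+u$ to be a QR for every root $a$, and $\deg g_u = 0$ would force $a+u$ to be a nonresidue or zero for every root $a$. I would show that a short interval of $u$'s cannot consistently push all $n$ shifted roots into the residues (or all into the nonresidues). Write $\cR = \{a : f(a)=0\}$, a set of $n$ distinct elements of $\F_p$. If $g_u$ were trivial for all $u \in \{0,1,\ldots,h-1\}$ with $h = \lfloor Cp^{1/2}\rfloor$, then for each such $u$ the quantity $\sum_{a \in \cR} \chi(a+u)$ (with $\chi$ the quadratic character, setting $\chi(0)=0$) would be $\pm n$ up to the correction from any $a$ with $a+u=0$ (at most one such $a$ per $u$). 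Summing $\left|\sum_{a\in\cR}\chi(a+u)\right|^2$ over $u \in \{0,\ldots,h-1\}$ would then be $\gg h n^2$. On the other hand, by expanding the square and applying the Weil bound (Lemma~\ref{lem:Weil}) to the incomplete character sums $\sum_{u} \chi((a+u)(b+u))$ for $a \ne b$ — this is the standard "large sieve / completion" estimate, e.g.\ as in the argument of \cite[Theorem~1.1]{Shp} already cited — one gets
$$
\sum_{u=0}^{h-1} \left|\sum_{a \in \cR} \chi(a+u)\right|^2 \ll hn + n^2 p^{1/2} (\log p).
$$
For $h = \lfloor Cp^{1/2}\rfloor$ with $C$ a suitable absolute constant, and using $n \le p$, the right side is $o(hn^2)$ once... actually one must be a little careful: this only beats $hn^2$ when $n$ is not too small, so I would instead take $h$ slightly larger, $h = \lfloor C p^{1/2}(\log p)^2 \rfloor$ as in Theorem~\ref{thm:Factor}, which keeps the scan cost at $p^{1/2+o(1)}$ per fixed polynomial and $(n + p^{1/2})p^{o(1)}$ in total, and then $hn + n^2 p^{1/2}\log p = o(h n^2)$ holds for all $n \ge 2$. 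This contradiction shows some $u \in \{0,\ldots,h-1\}$ has $1 \le \deg g_u < n$.

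The main obstacle, and the step I would be most careful about, is the Weil-bound estimate for $\sum_{u=0}^{h-1}\bigl|\sum_{a\in\cR}\chi(a+u)\bigr|^2$: one has to separate the diagonal terms $a=b$ (contributing $\le hn$ after accounting for the at most $n$ values of $u$ with $a+u=0$), and for the off-diagonal terms $a\ne b$ bound the incomplete sum $\sum_{u=0}^{h-1}\chi\bigl((a+u)(b+u)\bigr)$ by completing it to a full sum over $\F_p$ at the cost of a $\log p$ factor, the completed sum being $\le 2p^{1/2}$ by Lemma~\ref{lem:Weil} since $(X+a)(X+b)$ is squarefree and not a perfect square as $a\ne b$. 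This is precisely the mechanism invoked (without repetition) in the proof of Theorem~\ref{thm:Factor} via \cite{Shp}, so I would cite it rather than reprove it. The remaining details — the cost accounting for repeated squaring modulo $f$, the $\gcd$ computations, and checking the final bound $(n+p^{1/2})p^{o(1)}$ — are routine given the algorithmic tools from \cite{vzGG}.
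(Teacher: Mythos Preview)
Your argument correctly shows that some $g_u$ with $0\le u\le h-1$ gives a nontrivial factor, but the complexity claim is wrong, and that is precisely the content of the lemma. You compute $h\asymp p^{1/2}(\log p)^2$ polynomials $g_u$, each at cost $np^{o(1)}$; the total is $nhp^{o(1)}=np^{1/2+o(1)}$, which is \emph{not} $(n+p^{1/2})p^{o(1)}$. For instance, when $n\asymp p^{1/2}$ your running time is $p^{1+o(1)}$ while the target is $p^{1/2+o(1)}$. Your sentence ``which keeps the scan cost \ldots $(n+p^{1/2})p^{o(1)}$ in total'' simply asserts the desired bound; it does not follow from what precedes it. Moreover, the Weil-based second-moment estimate you invoke,
\[
\sum_{u=0}^{h-1}\Bigl|\sum_{a\in\cR}\chi(a+u)\Bigr|^2 \ll hn + n^2 p^{1/2}\log p,
\]
only contradicts the lower bound $\gg hn^2$ when $h\gg p^{1/2}\log p$. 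So within your method $h$ cannot be taken shorter, and you recover nothing beyond Theorem~\ref{thm:Factor}.

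The paper's proof gets around this by using the new bound of Lemma~\ref{lem:DoubleSums} in place of the raw Weil estimate: that lemma yields a nontrivial bound on $T_\chi(\cI,\cS)$ as soon as $\#\cS\cdot h>p^{1/2+\delta}$, provided $\cS$ is $h$-spaced. This allows $h$ as small as roughly $(1+p^{1/2}/n)p^{\delta/2}$ when $n>p^\delta$, so that $nh\,p^{o(1)}=(n+p^{1/2})p^{\delta/2+o(1)}$. To secure the $h$-spacing hypothesis, the paper first computes $\gcd\bigl(f(X),f(X+u)\bigr)$ for $u=1,\ldots,h$: either two roots lie within $h$ of each other and this already yields a factor, or the root set is certified $h$-spaced and Lemma~\ref{lem:DoubleSums} applies. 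Both ingredients---the preliminary spacing step and the appeal to Lemma~\ref{lem:DoubleSums}---are essential for the claimed running time and are absent from your proposal.
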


\begin{proof}
It suffices to prove that for any $\delta>0$ there is a desirable algorithm
with running time at most $(n+p^{1/2})p^{\delta+o(1)}$. If $n\le p^\delta$
then the result follows from Theorem~\ref{thm:Factor}. Now assume that
$\delta $ is small and $n> p^\delta$. Let
$$
h =  \fl {(1+ n^{-1}p^{1/2})p^{\delta/2}}.
$$

We start with computing the polynomials
\begin{equation}
\label{eq:gcd1}
\gcd\(f(X), f(X+u)\), \quad  u =1,\ldots,h,
\end{equation}
see~\cite{vzGG} for fast greatest common divisor algorithms. 
Clearly, if $f$ has two distinct roots $a$ and $b$ with
$|a - b| \le h$ then one
of the polynomials~\eqref{eq:gcd1}
gives a nontrivial factor of $f$. It is also easy to
see that the complexity of this step is at most $n h p^{o(1)}$.

If this step does not produce any nontrivial factor
of $f$ then we note that the set  $\cS$ of the roots of $f$
is $h$-spaced.  We now again compute the polynomials
$g_u(X)$, given by~\eqref{eq:gcd0},  for every $u \in \cI$.

So, we see that for the above choice of $h$
the condition of Lemma~\ref{lem:DoubleSums} holds
and implies that
there is $u \in \cI$ with
$$
 \left|  \sum_{s \in \cS}   \(\frac{s +  u}{p}\) \right| \ll \#\cS p^{-\eta}  = np^{-\eta}.
$$
for some $\eta> 0$ that depends only on $\delta$, and thus the
sequence of Legendre symbols $ \((s + u)/p\)$, $s \in \cS$, cannot
be constant.

Therefore, at least one of the polynomials~\eqref{eq:gcd0}
gives a nontrivial factor of $f$.
As in~\cite{Shoup}, we see that the complexity of this algorithm is
again
$O\(n h (\log p)^{O(1)}\)$. 
Since $\delta>0$ is an arbitrary,
we obtain the desired result.
\end{proof}

\begin{theorem}
\label{thm:one root}  There is a deterministic algorithm that,
given  a squarefree polynomial $f \in \F_p[X]$ of degree $n$
 that fully splits over $\F_p$,
finds in time  $(n+p^{1/2})p^{o(1)}$ a root of $f$.
\end{theorem}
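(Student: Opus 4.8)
The plan is to bootstrap Lemma~\ref{lem:one root} from ``one nontrivial factor'' to ``one root'' by repeated splitting. If $n\le 1$ there is nothing to do: up to a unit $f$ is either a nonzero constant or $X-a$, and in the latter case $a$ is the root. So assume $n>1$ and apply Lemma~\ref{lem:one root} to $f$, obtaining in time $(n+p^{1/2})p^{o(1)}$ a proper factor; dividing $f$ by it and keeping whichever of the two factors has smaller degree, I get $f_1\mid f$ with $1\le\deg f_1\le \fl{n/2}$. Since $f$ is squarefree and splits completely over $\F_p$, so does every divisor of $f$, in particular $f_1$; hence I may iterate.

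First I would run this loop: set $f_0=f$ and, as long as $\deg f_i>1$, apply Lemma~\ref{lem:one root} to $f_i$ together with a polynomial division to produce a divisor $f_{i+1}\mid f_i$ that is again squarefree, splits completely over $\F_p$, and satisfies $1\le \deg f_{i+1}\le \fl{\deg f_i/2}$. Because the degree is at least halved at each step, the loop terminates after $K\ll \log n$ iterations with a polynomial $f_K=X-a$, and $a$ is the desired root of $f$. Moreover, since $f$ is squarefree and its $n$ roots all lie in $\F_p$, we have $n\le p$, so $K\ll\log n\le\log p= p^{o(1)}$.

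For the running time, the $i$-th call to Lemma~\ref{lem:one root} costs $(\deg f_i+p^{1/2})p^{o(1)}\le (n+p^{1/2})p^{o(1)}$, and extracting $f_{i+1}$ from $f_i$ requires one polynomial division, of cost $np^{o(1)}$ (see~\cite{vzGG}). Summing over the $K= p^{o(1)}$ iterations gives the claimed bound $(n+p^{1/2})p^{o(1)}$. The argument is routine once Lemma~\ref{lem:one root} is available; the only point that needs care is to keep the number of splitting steps polylogarithmic in $n$ rather than linear, which is exactly why at each step one recurses into the factor of smaller degree, and why the bound $n\le p$ — forced by squarefreeness together with complete splitting — is used to absorb the iteration count into $p^{o(1)}$.
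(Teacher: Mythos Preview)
Your argument is correct and matches the paper's proof: both iterate Lemma~\ref{lem:one root}, at each step recursing into the factor of degree at most half, so that $O(\log n)$ rounds suffice and the total cost is $(n+p^{1/2})p^{o(1)}$. Your write-up is in fact a bit more careful than the paper's, making explicit the choice of the smaller factor and the inequality $n\le p$ that absorbs the $\log n$ iteration count into $p^{o(1)}$.
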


\begin{proof} We use Lemma~\ref{lem:one root}
to find a polynomial factor $g_1$ of $f$ with $1 \le \deg g \le 0.5\deg f$.
Next, we find a polynimial factor $g_2$ of $g_1$ with
$1 \le \deg g_2 \le 0.5 \deg g_1$, and so on. The number of iterations
is $O(\log n)$, and the complexity of each iteration, by Lemma~\ref{lem:one root},
does not exceed $(n+p^{1/2})p^{o(1)}$. This completes the proof.
\end{proof}

\section{Comments}

It is certainly natural to expect that  the condition of Lemma~\ref{lem:DoubleSums}
can be relaxed, however proving such a result seems to be presently out of reach 
(even under the standard number theoretic conjectures). 
Furthermore, such an improvement does not immediately propagate 
into improvements of Theorems~\ref{thm:Factor} 
and~\ref{thm:one root}. It seems that within the method of Shoup~\cite{Shoup}
the only plausible way to reduce the complexity below $p^{1/2}$ is 
to obtain nontrivial bounds of single sums of Legendre sympbols 
$$
\left| \sum_{x=1}^H   \(\frac{(x +  s_1)(x+s_2)}{p}\) \right| \le H p^{-\eta}
$$
for intervals of length $H \ge p^{\alpha}$ with some fixed $\alpha < 1/2$, 
uniformly over $s_1,s_2 \in \F_p$, $s_1 \ne s_2$. 
It seems that even the Generalised Riemann Hypothesis (GRH) does not 
immediately imply such a statement. In fact, even in the case of 
linear polynomials, it is not known how to use the GRH 
to get an improvement  of the Burgess bound~\cite{Burg1,Burg2}
(for intervals away from the origin). 

\section*{Acknowledgement}

The authors are grateful to Andrew Sutherland for patient 
explanations of several issues  related to Hilbert class polynomials. 

The research of  J.~B. was partially supported by National Science
Foundation,  Grant DMS-0808042, that of  S.~V.~K. by Russian Fund
for Basic Research Grant N.~14-01-00332, and Program Supporting
Leading Scientific Schools, Grant Nsh-3082.2014.1, and that of
I.~E.~S. by Australian Research Council, Grant 
DP130100237.

\end{document}